\def\BBox{\kern  -0.2cm\hbox{\vrule width 0.2cm height 0.2cm}}
\newtheorem{teo}{Theorem}[section]
\newtheorem{coro}[teo]{Corollary}
\newtheorem{lema}[teo]{Lemma}
\newtheorem{conjecture}[teo]{Conjecture}
\theoremstyle{definition}
\theoremstyle{remark}
\title{A note on extremal intersecting linear Ryser systems}
\author{Adrián Vázquez-Ávila\thanks{adrian.vazquez@unaq.mx}\\
{\small Subdirección de Ingeniería y Posgrado}\\
{\small Universidad Aeronáutica en Querétaro}\\
}
\date{}
\begin{document}
\maketitle
\begin{abstract}
A famous conjecture of Ryser states that any $r$-partite set system has transversal number at most $r-1$ times their matching number. This conjecture is only known to be true for $r\leq3$ in general, for $r\leq5$ if the set system is intersecting, and for $r\leq9$ if the intersecting set system is linear. In this note, we deal with Ryser's Conjecture for intersecting $r$-partite linear systems; that is, if $\tau$ is the transversal number for an intersecting $r$-partite linear system, then Ryser's Conjecture states that $\tau\leq r-1$. If this conjecture is true, this is known to be sharp for $r$ for which there exists a projective plane of order $r-1$. There has also been considerable effort to find intersecting $r$-partite set systems whose transversal number is $r-1$.  

In this note, the following is proved: if $r\geq4$ is an even integer, then $f_l(r)\geq3(r-2)+1$, where $f_l(r)$ is the minimum number of lines of an intersecting $r$-partite linear system whose transversal number is $r-1$. This lower bound gives an exact value for $f_l(r)$, for some small values of $r$. Also, we prove that any $r$-partite linear system satisfies $\tau\leq r-1$ if $\nu_2\leq r$ for all $r\geq3$ odd integer and $\nu_2\leq r-1$ for all $r\geq4$ even integer, where $\nu_2$ is the maximum cardinality of a subset of lines $R\subseteq\mathcal{L}$ such that every triplet of different elements of $R$ does not have a common point. 
\end{abstract}

\textbf{Keywords.} Ryser's Conjecture, linear systems, transversal number, 2-packing number, projective planes

\section{Introduction}
A \emph{set system} is a pair $(X,\mathcal{F})$ where $%
\mathcal{F}$ is a finite family of subsets on a ground set $X$. A
set system can be also thought of as a hypergraph, where the elements of $X$ and $\mathcal{F}$ are called \emph{vertices} and \emph{hyperedges} respectively. The set system $(X,\mathcal{F})$ is called \emph{$r$-uniform}, when all subsets of $\mathcal{F}$ has size $r$. The set system $(X,\mathcal{F})$ is \emph{$r$-partite} if the elements of $X$ can be partitioned into $r$ sets $X_1,\ldots,X_r$, called the \emph{sides}, such that each element of $\mathcal{F}$ contain exactly one element of $X_i$, for all $i=1,\ldots,r$, that is $|F\cap X_i|=1$, where $F\in\mathcal{F}$. Thus, an $r$-partite set system is an $r$-uniform set system.  

Let $(X,\mathcal{F})$ be a set system. A subset $T\subseteq X$ is a \emph{transversal} of $(X,\mathcal{F})$ if for any element $F\in\mathcal{F}$ contain at least an element of $T$, that is, $T\cap F\neq\emptyset$, for every $F\in\mathcal{F}$. The \emph{transversal number} of $(X,\mathcal{F})$, $\tau=\tau(X,\mathcal{F})$, is the smallest possible cardinality of a transversal of $(X,\mathcal{F})$. The transversal number has been studied in the literature in many different contexts and names. For example, with the name of \emph{piercing number} and \emph{co\-vering number}, see for instance \cite{AK06,AK06_2,AKMM01,Eckhoff,Huicochea,MS11,MR1921545,MR1149871,Oliveros}.

Let $(X,\mathcal{F})$ be a set system. A subset $\mathcal{E}\subseteq\mathcal{F}$ of a set system $(X,\mathcal{F})$ is called a \emph{matching} if $F\cap\hat{F}=\emptyset$ for every $F,\hat{F}\in\mathcal{E}$. The matching number of $(X,\mathcal{F})$, $\nu=\nu(X,\mathcal{F})$, is the cardinality of the largest matching of $(X,\mathcal{F})$. A set system is called \emph{intersecting} if $\nu=1$; that is, $F\cap\hat{F}\neq\emptyset$, for every $F,\hat{F}\in\mathcal{F}$.

It is not hard to see that any $r$-uniform set system $(X,\mathcal{F})$ satisfies the inequality $\tau\leq r\nu$. It is well-known this bound is sharp, as shown by the family of all subsets of size $r$ in a ground set of size $kr-1$, which has $\nu=k-1$ and $\tau=(k-1)r$. On the other hand, if $\nu=1$, any projective plane of order $r-1$ (denoted by $\Pi_{r-1}$ in this paper), where $r-1$ is a prime power, satisfies $\tau=r$. However, for $r$-partite set systems, Ryser conjectured in the 1960's that the upper bound could be improved.

{\bf Ryser's Conjecture: }Any $r$-partite set system satisfies $\tau\leq(r-1)\nu.$

For the special case $r=2$, Ryser's Conjecture is equivalent to K{\H o}nig's Theorem. The only other known general case of the conjecture is when $r=3$, see \cite{3Aharoni}. However, Ryser's conjecture is also known to be true in some special cases. Tuza verified \cite{<=5Tuza} Ryser's Conjecture for $r\leq5$ if the set system is assumed intersecting. Furthermore, Franceti{\' c} et al. \cite{FRANCETIC201791} verified Ryser's Conjecture for $r\leq9$ if the set system is a linear system, that is, a set system $(X,\mathcal{F})$ is \emph{linear} if it satisfies $|E\cap F|\leq 1$, for every pair of distinct subsets $E,F \in \mathcal{F}$. In this note every linear system is denoted by $(P,\mathcal{L})$, where the elements of $P$ and $\mathcal{L}$ are called \emph{points} and \emph{lines}, respectively. In the rest of this paper, only linear systems are considered. Most of the definitions can be generalized for set systems. Thus, we deal with Ryser's Conjecture for intersecting $r$-partite linear systems.

{\bf Intersecting linear Ryser's Conjecture:}
Every intersecting $r$-partite li\-near system satisfies $\tau\leq r-1$.
 
In case the conjecture would be true, it is tight in the sense that for infinitely many $r$'s there are constructions of intersecting $r$-partite linear systems with $\tau=r-1$. For example, if $r-1$ is a  prime power, consider the finite projective plane of order $r-1$ as a linear system, $\Pi_{r-1}$. This linear system is $r$-uniform and intersecting. To make it $r$-partite, one just needs to delete one point from the projective plane. This truncated projective plane gives an intersecting $r$-partite linear system with $\tau\geq r-1$, and $r(r-1)$ points and $(r-1)^2$ lines, and it will be denoted by $\Pi^\prime_{r-1}$. However, the construction obtained from the projective plane is not the ``optimal'' extremal. Although the projective plane construction only contains $r(r-1)$ points (which is an optimal number of points), it has a lot of lines. Let $f(r)$ be the minimum integer so that there exists an intersecting $r$-partite set system $(X,\mathcal{F})$ with $\tau= r-1$ and $|\mathcal{F}|=f(r)$ lines. Analogously, let $f_l(r)$ be the minimum integer so that there exists an intersecting $r$-partite linear system $(P,\mathcal{L})$ with $\tau=r-1$ and $|\mathcal{L}|=f_l(r)$ lines. It is probably that $f_l(r)$ does not exist for some values of $r$ (if $r-1$ is a prime power, then $\Pi^\prime_{r-1}$ is known to exist, providing proof that $f_l(r)$ is well-defined). Hence, if $f_l(r)$ does exist, for some $r$, then $f(r)\leq f_l(r)$. It is not difficult to prove that $f_l(2)=1$ and $f_l(3)=3$, see \cite{Mansour}. Furthermore, Mansour et al. \cite{Mansour} proved that $f_l(4)=6$ and $f_l(5)=9$. On the other hand, Aharoni et al. \cite{MultiAharoni} proved that $f_l(6)=13$ and $f(7)=17$ (even when the truncated projective plane does not exist, since it has been proved that finite projective planes of order six do not exist, see \cite{6projective_plane}); however Franceti{\' c} proved \cite{FRANCETIC201791} that $f_l(7)$ does not exist, that is, there is no intersecting $7$-partite linear system such that $\tau=6$. Abu-Khazneha et al. \cite{Abu-Khazneha} constructed new infinite family of intersecting $r$-partite set systems extremal to Ryser's Conjecture, which exist whenever a projective plane of order $r-2$ exists. That construction produces a large number of non-isomorphic extremal set systems. Finally, Aharoni et al. \cite{MultiAharoni} gave a lower bound on $f(r)$ when $r\to\infty$, showing that $f(r)\geq$3.052$r+O(1)$, this lower bound is an improvement since Mansour proved \cite{Mansour} that $f(r)\geq(3-\frac{1}{\sqrt{18}})r(1-o(1))\approx$2.764$(1-o(1))$, when $r\to\infty$. In this paper, a lower bound for $f_l(r)$ is given for $r\geq4$ an even integer. 

\begin{teo}\label{thm:main_intro}
If $r\geq4$ is an even integer, then $3(r-2)+1\leq f_l(r)$.
\end{teo}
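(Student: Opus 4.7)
I would prove the contrapositive: assume $(P,\mathcal{L})$ is an intersecting $r$-partite linear system with $r\geq 4$ even and $|\mathcal{L}|\leq 3(r-2)$, and exhibit a transversal of size $r-2$, contradicting $\tau=r-1$. The argument is a structural ``swap'' based on analysing the degrees of the points of a single, carefully chosen spine line.

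Fix a line $L_0=\{p_1,\dots,p_r\}$, and for each $i$ set $\alpha_i=d(p_i)-1$. Because the system is linear and intersecting, every line distinct from $L_0$ meets $L_0$ in exactly one point, so
\[
|\mathcal{L}|=1+\sum_{i=1}^{r}\alpha_i\leq 3r-6,\qquad\text{hence}\qquad \sum_{i=1}^{r}\alpha_i\leq 3r-7.
\]
A short argument using $\tau=r-1$ shows that at most one index can have $\alpha_i=0$: if $\alpha_i=\alpha_j=0$, then $L_0\setminus\{p_i,p_j\}$ (of size $r-2$) is already a transversal. After handling this degenerate sub-case separately, I may assume $\alpha_i\geq 1$ for all $i$, and the budget then forces several indices to have $\alpha_i=1$.

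The core construction locates two disjoint index pairs $\{i_1,j_1\}$ and $\{i_2,j_2\}$ such that $\alpha_{i_k}=\alpha_{j_k}=1$ for $k=1,2$. For each pair, write $M_{i_k},M_{j_k}$ for the unique non-$L_0$ lines through $p_{i_k},p_{j_k}$; by intersectingness and linearity, $M_{i_k}\cap M_{j_k}=\{q_k\}$ is a single point. Then
\[
T=\bigl(L_0\setminus\{p_{i_1},p_{j_1},p_{i_2},p_{j_2}\}\bigr)\cup\{q_1,q_2\}
\]
has size $r-2$ and is a transversal: any line $L\neq L_0$ meets $L_0$ at some $p_m$; if $m\notin\{i_1,j_1,i_2,j_2\}$ then $p_m\in T$ covers $L$, and otherwise $L$ is the unique external line through $p_m$, which by construction contains the corresponding $q_k\in T$. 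The evenness of $r$ is used here to pair up the spine into $r/2$ two-element blocks so that the swap arithmetic ``removes four, adds two'' leaves the intended count $r-2$.

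The main obstacle is guaranteeing the existence of two disjoint $\{i,j\}$-pairs with $\alpha_i=\alpha_j=1$ under only $\sum\alpha_i\leq 3r-7$. A naive pigeonhole argument is too weak for large $r$, since up to $2r-7$ indices can have $\alpha_i\geq 2$. To overcome this I would choose $L_0$ extremally (for instance, minimizing $\sum_{p\in L_0}d(p)$ among all lines) to flatten the profile $(\alpha_i)$, and separately handle the residual case in which few pairs of $1$-indices exist: there the external lines form a very small intersecting linear subsystem of size at most $3r-7$, and one can show that the pencils through the remaining spine points must concentrate at common points, yielding the swap directly. The parity of $r$ is used crucially both in the pairing count and in this residual case, which is why the theorem is stated for even $r$.
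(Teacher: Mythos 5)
Your counting frame is sound: every line other than $L_0$ meets it in exactly one point, so $|\mathcal{L}|=1+\sum_i\alpha_i$; two indices with $\alpha_i=0$ do give a transversal of size $r-2$; and the swap $T=(L_0\setminus\{p_{i_1},p_{j_1},p_{i_2},p_{j_2}\})\cup\{q_1,q_2\}$ is a transversal whenever the two disjoint pairs with $\alpha_{i_k}=\alpha_{j_k}=1$ exist and $r\geq5$. But the heart of the argument --- the existence of those two disjoint degree-$2$ pairs --- is precisely what you do not prove, and it does not follow from the budget: with $\alpha_i\geq1$ and $\sum_i\alpha_i\leq 3r-7$, already for $r\geq8$ every index may have $\alpha_i\geq2$ (since $2r\leq 3r-7$), so not even a single $\alpha_i=1$ is forced; choosing $L_0$ to minimize $\sum_{p\in L_0}\deg(p)$ buys nothing in a near-regular system, and the ``residual case'' is asserted, not argued. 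There is also a decisive warning sign: nothing in your machinery genuinely uses the parity of $r$ (removing four spine points and adding $q_1,q_2$ gives $r-2$ for every $r$), yet the statement is false for odd $r$: the paper records $f_l(3)=3<4$ and $f_l(5)=9<10$. In particular, the $9$-line intersecting $5$-partite linear system with $\tau=4$ satisfies $|\mathcal{L}|=9\leq 3(r-2)$, so on every one of its lines your two disjoint degree-$2$ pairs must fail to exist; hence the existence step cannot be closed by the counting alone, and whatever closes it is exactly where evenness must enter --- which your sketch does not identify. (A smaller defect: for $r=4$ the swap breaks outright, since $T=\{q_1,q_2\}$ is disjoint from $L_0$ and the spine line is left uncovered.)

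For contrast, the paper does not argue locally along a line at all: evenness enters through the $2$-packing number. Lemmas \ref{lemma:nu_2=r} and \ref{lemma:nu_2=r,tau} (summarized in Corollary \ref{coro:main}) show that for even $r\geq4$ an intersecting system with $\tau\in\{r-1,r\}$ must have $\nu_2=r$, and Theorem \ref{thm:tau<=nu2-1} converts a bound $|\mathcal{L}|\leq\Delta+\Delta'+\nu_2-3$ into $\tau\leq\nu_2-1$; combining these, a system with at most $3(r-2)$ lines cannot have $\tau=r-1$. If you want to rescue your line-local approach, you need a parity-sensitive structural input playing the role of Corollary \ref{coro:main} --- something forcing the degree profile along some line of an extremal system to contain the two disjoint degree-$2$ pairs --- and at present that is an open gap, not a proof.
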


Notice that, if $r\in\{2,4,6\}$, then $f_l(r)=3(r-2)+1$. Aharoni et al.  \cite{MultiAharoni} proved $18\leq f(8)$ and $24\leq f(10)$. Hence, by Theorem \ref{thm:main_intro} we have: $19\leq f_l(8)$ and $25\leq f_l(10)$.

\section{Main Results}\label{sec:previous}
Let $(P,\mathcal{L})$ be a linear system and $p\in P$ be a point. It is denoted by $\mathcal{L}_p$ to be the set of lines incident to $p$. The \emph{degree} of $p$ is defined as $deg(p)=|\mathcal{L}_p|$ and the maximum degree over all points of the linear system is denoted by $\Delta=\Delta(P,\mathcal{L})$. 

A subset $R$ of lines of a linear system $(P,\mathcal{L})$ is a \emph{$2$-packing} of $(P,\mathcal{L})$ if the elements of $R$ are triplewise disjoint, that is, if any three elements are chosen in $R$, then they have not a common point. The 2-packing number of $(P,\mathcal{L})$, $\nu=\nu_2(P,\mathcal{L})$, is the maximum cardinality of a 2-packing of $(P,\mathcal{L})$. There are some works that study this new parameter, see \cite{AvilaLetters,AvilaAKCE,MR3727901,Avila_covering_grphs,AvilaEnotes,AvilaArscomb,Avila_dom_gra,AvilaJDMSC,AvilaBSSM}. 

\begin{teo}\cite{AvilaAKCE}\label{thm:tau<=nu2-1}
Let $(P,\mathcal{L})$ be a linear system with $p,q\in P$ be two points such that $\Delta=deg(p)$ and $\Delta'=\max\{deg(x): x\in
P\setminus\{p\}\}$. If $|\mathcal{L}|\leq \Delta+\Delta'+\nu_2-3$, then $\tau\leq\nu_2-1$.
\end{teo}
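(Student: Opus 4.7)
The plan is to construct a transversal of size at most $\nu_2-1$ by augmenting the natural candidate $\{p,q\}$ with a carefully chosen set of points. Set $\mathcal{L}'=\mathcal{L}\setminus(\mathcal{L}_p\cup\mathcal{L}_q)$; since the system is linear, $|\mathcal{L}_p\cap\mathcal{L}_q|\leq 1$, and the hypothesis $|\mathcal{L}|\leq\Delta+\Delta'+\nu_2-3$ immediately yields
\[
|\mathcal{L}'|\leq\nu_2-3+|\mathcal{L}_p\cap\mathcal{L}_q|.
\]

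I would first dispose of the case $\mathcal{L}_p\cap\mathcal{L}_q=\emptyset$, where $|\mathcal{L}'|\leq\nu_2-3$ and selecting one arbitrary point from each line of $\mathcal{L}'$ and adjoining them to $\{p,q\}$ gives a transversal of size at most $\nu_2-1$. The substantive case is when $p$ and $q$ lie on a common line $\ell^*$: then $|\mathcal{L}'|\leq\nu_2-2$ and the naive construction falls exactly one point short. If some two lines of $\mathcal{L}'$ meet at a common vertex $y$, then $y$ covers both of them simultaneously, which achieves the required saving and finishes this subcase.

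The only genuinely delicate situation is when $\mathcal{L}'$ is a matching. Here my plan is to show that $\mathcal{L}'$ extends to a $2$-packing of size $|\mathcal{L}'|+3$, which forces $|\mathcal{L}'|\leq\nu_2-3$ and returns to the easy case. The candidate extension is $\mathcal{L}'\cup\{\ell^*,\ell_p,\ell_q\}$ with $\ell_p\in\mathcal{L}_p\setminus\{\ell^*\}$ and $\ell_q\in\mathcal{L}_q\setminus\{\ell^*\}$: the pairwise disjointness of $\mathcal{L}'$ and the fact that $\ell^*,\ell_p,\ell_q$ meet one another only at $p$ or $q$ (points that no line of $\mathcal{L}'$ contains) rule out every triple through a common point except possibly $\{\ell_p,\ell_q,\ell\}$ for some $\ell\in\mathcal{L}'$ passing through $y:=\ell_p\cap\ell_q$. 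The main obstacle is to choose $\ell_p$ and $\ell_q$ avoiding this last configuration; I would handle it by the counting argument that for each fixed $\ell_p$ every $\ell\in\mathcal{L}'$ kills at most one $\ell_q$, namely the unique line of $\mathcal{L}_q$ through $q$ and $\ell\cap\ell_p$, so that the bound on $|\mathcal{L}'|$ provided by the hypothesis $|\mathcal{L}|\leq\Delta+\Delta'+\nu_2-3$ leaves a valid pair. The degenerate situations in which $\Delta$ or $\Delta'$ is so small that no appropriate $\ell_p$ or $\ell_q$ exists will be dispatched by direct inspection, since they force $\mathcal{L}\setminus\mathcal{L}_p$ to already be a matching.
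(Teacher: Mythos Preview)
The paper does not actually prove this theorem: it is quoted verbatim from \cite{AvilaAKCE} and then used as a black box, so there is no proof in the paper to compare against. I can therefore only assess your outline on its own merits.

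Your reduction to the residual family $\mathcal{L}'=\mathcal{L}\setminus(\mathcal{L}_p\cup\mathcal{L}_q)$ and the treatment of the easy cases are fine. The gap is in Subcase~2b, the ``matching'' case when $p$ and $q$ lie on a common line $\ell^*$. Your plan there is to exhibit a $2$-packing $\mathcal{L}'\cup\{\ell^*,\ell_p,\ell_q\}$ of size $|\mathcal{L}'|+3$, forcing $|\mathcal{L}'|\le\nu_2-3$. But the counting you propose only shows that, for a fixed $\ell_p$, at most $|\mathcal{L}'|$ of the $\Delta'-1$ candidates for $\ell_q$ are blocked; to conclude you would need $|\mathcal{L}'|\le\Delta'-2$ (or, symmetrically, $|\mathcal{L}'|\le\Delta-2$), whereas the hypothesis only yields $|\mathcal{L}'|\le\nu_2-2$. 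These are different bounds, and neither implies the other in general.

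In fact no choice of $(\ell_p,\ell_q)$ need exist. Take the six points $p,q,a,b,c,d$ and the seven lines
\[
\ell^*=\{p,q\},\quad \{p,a,b\},\ \{p,c,d\},\ \{q,a,c\},\ \{q,b,d\},\quad \ell_1=\{a,d\},\ \ell_2=\{b,c\}.
\]
This is linear, every point has degree $3$, so $\Delta=\Delta'=3$; one checks $\nu_2=4$ and $|\mathcal{L}|=7=\Delta+\Delta'+\nu_2-3$, so the hypothesis holds with equality. Here $\mathcal{L}'=\{\ell_1,\ell_2\}$ is a matching of size $\nu_2-2=2$, and for \emph{every} pair $(\ell_p,\ell_q)$ the intersection $\ell_p\cap\ell_q$ lies on $\ell_1$ or $\ell_2$; consequently there is no $2$-packing of size $5$, and your argument cannot close. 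The theorem is nevertheless true in this example ($\{p,a,b\}$ is a transversal of size $3=\nu_2-1$), but the optimal transversal does not contain both $p$ and $q$. So the strategy of always augmenting $\{p,q\}$ is too rigid: the matching subcase requires a genuinely different construction of the transversal, not merely ``direct inspection'' of small-degree degeneracies.
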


\begin{lema}\cite{AvilaEnotes}\label{lemma:nu_2=r+1}
Let $(P,\mathcal{L})$ be an intersecting $r$-uniform linear system, with $r\geq3$ be an odd integer. If $\tau=r$, then $\nu_2=r+1$.
\end{lema}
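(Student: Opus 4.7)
The plan is to establish both inequalities $\nu_2\leq r+1$ and $\nu_2\geq r+1$; the second is the substantive one where oddness of $r$ is essential. For the upper bound, the argument is a direct double-counting: given any 2-packing $R=\{L_1,\ldots,L_k\}$, the 2-packing condition means every point of $P$ lies on at most two lines of $R$, so the total incidence count $kr$ satisfies $kr=2\binom{k}{2}+n_1$, where $n_1\geq 0$ is the number of points on exactly one line of $R$. The intersecting linear hypothesis guarantees that the $\binom{k}{2}$ pairwise intersections $q_{ij}=L_i\cap L_j$ are distinct, and rearranging yields $k\leq r+1$.

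For the lower bound I would proceed by contradiction: assume $\nu_2\leq r$, take a maximum 2-packing $R$ of size $k\leq r$, and aim to exhibit a transversal of size at most $r-1$. First, by maximality of $R$, every line $L\notin R$ contains at least one intersection point $q_{ij}$; moreover, if $L\cap L_i=q_{i,j}$, then necessarily $L\cap L_j=q_{i,j}$, which pairs the indices $i$ and $j$. Letting $I_L\subseteq [k]$ denote the set of indices where $L$ meets $L_i$ at a point private to $L_i$, the complementary set $[k]\setminus I_L$ is partitioned into 2-element blocks by the pairing, forcing $|I_L|\equiv k\pmod{2}$. When $k=r$ is odd, this yields $|I_L|\geq 1$ for every $L\notin R$, so the $r$ private points $e_1,\ldots,e_r$ already form a transversal of size $r$.

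I would then run an exchange argument: replace two private points $e_i,e_j$ by the single intersection $q_{ij}\in L_i\cap L_j$ to form a candidate $T_{ij}$ of size $r-1$. A short check shows $T_{ij}$ always covers $R$ and covers a non-$R$ line $L$ unless $I_L=\{i\}$ or $I_L=\{j\}$, since for such singletons the involution structure on $[k]\setminus I_L$ forbids $q_{ij}\in L$. If the blocked set $B:=\{i:\exists L\notin R \text{ with } I_L=\{i\}\}$ has size at most $r-2$, choosing $i,j\notin B$ already yields the desired transversal. The main obstacle is the residual case $|B|\in\{r-1,r\}$, and here I would either iterate the exchange (swapping two disjoint pairs of private points for two intersection points, still reducing the size by two while again using the odd parity), or combine with Theorem~\ref{thm:tau<=nu2-1} applied to a pair of high-degree points to close the gap. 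The parity of $r$ is indispensable throughout: it simultaneously guarantees $|I_L|\geq 1$ and enables the pigeonhole step on the singletons $I_L$, paralleling the known fact that $r+1$ sized dual 2-packings (hyperovals) in projective planes of order $r-1$ exist precisely when $r-1$ is even.
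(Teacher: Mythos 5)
The paper does not prove this lemma in the text (it is quoted from the reference \cite{AvilaEnotes}), so your proposal has to stand on its own, and it has genuine gaps. The upper bound $\nu_2\leq r+1$ and the pairing/parity machinery are correct: for a maximum $2$-packing $R=\{L_1,\dots,L_k\}$ every line $L\notin R$ contains some $q_{ij}$ by maximality, and the indices at which $L$ meets lines of $R$ in intersection points split into $2$-element blocks, so $|I_L|\equiv k\pmod 2$. But your contradiction argument only treats $k=\nu_2=r$. The statement you must refute is $\nu_2\leq r$, and when $k<r$ each $L_i$ has $r-k+1>1$ private points, so there is no canonical point $e_i$, the chosen points need not cover all lines with $i\in I_L$, and when $k$ is even the parity argument does not even give $I_L\neq\emptyset$. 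Nothing in the proposal handles $\nu_2\leq r-1$ (taking all $\binom{\nu_2}{2}$ intersection points gives a transversal, but that only helps when $\binom{\nu_2}{2}\leq r-1$; the intermediate range of $\nu_2$ is untouched).

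More seriously, even in the case $\nu_2=r$ the exchange argument by itself proves nothing: under the hypothesis $\tau=r$ that you are contradicting, the failure of $T_{ij}$ for \emph{every} pair $\{i,j\}$ is automatic, so $B$ meets every pair and $|B|\geq r-1$ is forced. The ``residual case'' you defer is therefore not a corner case; it is the entire problem, and your two suggested fixes do not close it. Swapping out two disjoint pairs $\{e_i,e_j\}$, $\{e_l,e_m\}$ for $q_{ij},q_{lm}$ only removes more private points: a line $L$ with $I_L=\{i\}$ (which exists because $i\in B$) can never contain $q_{ij}$ for any $j$, and there is no reason for it to contain $q_{lm}$, so it stays uncovered. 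Invoking Theorem \ref{thm:tau<=nu2-1} requires $|\mathcal{L}|\leq\Delta+\Delta'+\nu_2-3$, a hypothesis you never verify and which fails badly for the natural candidates here (projective-plane-like systems have on the order of $(r-1)^2$ lines, far more than roughly $3r$). As written, your argument establishes only that $\tau=r$ and $\nu_2=r$ force $|B|\geq r-1$; the step from this configuration to a contradiction, together with a treatment of $\nu_2<r$, is exactly the missing content of the lemma, and it is where the oddness of $r$ must do real work beyond guaranteeing $|I_L|\geq1$.
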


\begin{coro}\label{Coro:nu=r,r_partita}
Let $(P,\mathcal{L})$ be an intersecting $r$-partite linear system, with $r\geq3$ be an odd integer. If $\nu_2\leq r$ then $\tau\leq r-1$.
\end{coro}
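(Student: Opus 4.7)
The plan is to argue by contrapositive and reduce the corollary to a direct application of Lemma \ref{lemma:nu_2=r+1}. Since an $r$-partite linear system is automatically $r$-uniform (as noted in the introduction), the hypothesis places us squarely in the setting of the lemma, provided we can rule out $\tau = r$ under the assumption $\nu_2 \leq r$.

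First I would establish the elementary upper bound $\tau \leq r$. Fix any line $L \in \mathcal{L}$; because $(P,\mathcal{L})$ is intersecting, every other line shares a (necessarily unique, by linearity) point with $L$, so the $r$ points of $L$ form a transversal. Hence $\tau \leq r$ holds automatically for every intersecting $r$-uniform linear system, and in particular for ours.

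Now suppose, for contradiction, that $\tau > r-1$. Combined with $\tau \leq r$, this forces $\tau = r$. Since $r \geq 3$ is odd and $(P,\mathcal{L})$ is an intersecting $r$-uniform linear system, Lemma \ref{lemma:nu_2=r+1} applies and yields $\nu_2 = r+1$. This contradicts the hypothesis $\nu_2 \leq r$, so $\tau \leq r-1$, as claimed.

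There is essentially no obstacle here: the only subtlety is to notice that the two standing assumptions ($r$-partite, intersecting) together give the free bound $\tau \leq r$, which is what allows the lemma to collapse the gap between $\tau \leq r$ and the desired $\tau \leq r-1$. No parity or structural information about the parts $X_1,\ldots,X_r$ is needed beyond the uniformity it implies.
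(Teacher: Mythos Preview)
Your argument is correct and is precisely the intended one: the paper states Corollary \ref{Coro:nu=r,r_partita} immediately after Lemma \ref{lemma:nu_2=r+1} without proof, expecting the reader to take the contrapositive together with the elementary bound $\tau\leq r\nu=r$ already noted in the introduction. Your write-up simply makes these two steps explicit.
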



Now, let deal with the case when $r$ is an even integer. 

\begin{lema}\cite{AvilaArscomb}\label{lemma:impar}
Let $(P,\mathcal{L})$ be an $r$-uniform intersecting linear system with $r\geq2$ be an even integer. If $\nu_2=r+1$ then $\tau=\frac{r+2}{2}$. 
\end{lema}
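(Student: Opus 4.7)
The plan is to exploit the rigidity that $\nu_2=r+1$ forces on an $r$-uniform intersecting linear system when $r$ is even, and then compute $\tau$ directly.

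First I would fix a $2$-packing $R=\{L_1,\dots,L_{r+1}\}\subseteq\mathcal{L}$ of maximum size $r+1$. Linearity plus the intersecting hypothesis give a unique intersection point $p_{ij}=L_i\cap L_j$ for all $i\neq j$, and the $2$-packing property says no three of these points coincide, so the $\binom{r+1}{2}$ points $p_{ij}$ are pairwise distinct. Counting points on a single line, each $L_i$ meets the other $r$ lines of $R$ in $r$ distinct points, which already exhausts the $r$ points of $L_i$; that is, every point of $L_i$ is of the form $p_{ij}$ for some $j\neq i$. This will be the structural lemma I rely on throughout.

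Next I would rule out extra lines. Assume for contradiction some $L\in\mathcal{L}\setminus R$. Since the system is intersecting, $L\cap L_i$ is a single point of $L_i$, so by the structural observation $L\cap L_i=p_{i,\sigma(i)}$ for a well-defined $\sigma(i)\neq i$. Then $p_{i,\sigma(i)}\in L_{\sigma(i)}\cap L$, and this intersection is unique by linearity, so $\sigma(\sigma(i))=i$; moreover $\sigma$ has no fixed point. Thus $\sigma$ would be a fixed-point-free involution of $\{1,\dots,r+1\}$, which is impossible because $r+1$ is odd. Therefore $\mathcal{L}=R$ and $|\mathcal{L}|=r+1$.

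Finally I would read off $\tau$ from the concrete configuration. For the lower bound, any point lies on at most two lines of $R$ (otherwise we violate the $2$-packing property), so any transversal must use at least $\lceil(r+1)/2\rceil=\frac{r+2}{2}$ points. For the matching upper bound I would exhibit the transversal
\[
T=\{p_{1,2},\,p_{3,4},\,\dots,\,p_{r-1,r}\}\cup\{q\},
\]
where $q$ is any point of $L_{r+1}$; this has exactly $\frac{r}{2}+1=\frac{r+2}{2}$ elements and hits every line of $\mathcal{L}=R$. Combining the two bounds gives $\tau=\frac{r+2}{2}$. The only real obstacle is the structural step identifying points of $L_i$ with the $p_{ij}$'s, which is what makes the parity argument on $r+1$ possible; once that is in hand, everything else is immediate.
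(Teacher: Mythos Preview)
The paper does not actually supply a proof of this lemma; it is imported wholesale from the reference \cite{AvilaArscomb}, so there is no in-paper argument to compare against. Your proposal therefore has to be judged on its own, and it is correct and complete.

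The key idea --- that each line $L_i$ of a maximum $2$-packing $R$ of size $r+1$ is exactly the set $\{p_{ij}:j\neq i\}$ because $|L_i|=r$ and the $r$ pairwise intersections are distinct by the $2$-packing condition --- is sound, and it is what makes the parity step work. Your involution argument is clean: for $L\in\mathcal{L}\setminus R$ the assignment $i\mapsto\sigma(i)$ with $L\cap L_i=\{p_{i,\sigma(i)}\}$ is well defined (each point of $L_i$ has a unique second index), and from $p_{i,\sigma(i)}\in L\cap L_{\sigma(i)}$ together with linearity you get $\{\sigma(i),\sigma(\sigma(i))\}=\{i,\sigma(i)\}$, hence $\sigma^2=\mathrm{id}$ with no fixed point. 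Since $r+1$ is odd this is impossible, so $\mathcal{L}=R$. The computation of $\tau$ then follows: every point lies in at most two lines of $R$, giving $\tau\geq\lceil(r+1)/2\rceil=(r+2)/2$, and your explicit transversal $\{p_{1,2},p_{3,4},\dots,p_{r-1,r}\}\cup\{q\}$ with $q\in L_{r+1}$ achieves this bound.

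One cosmetic remark: you could streamline the contradiction by noting directly that $L$ meets the $r+1$ lines of $R$ in points that come in pairs $p_{ij}=p_{ji}$, so $L$ induces a perfect matching on $\{1,\dots,r+1\}$; this is the same parity obstruction phrased without the word ``involution''. Either way, the argument is valid and self-contained.
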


Hence, if $\tau>\frac{r+2}{2}$ then $\nu_2\leq r$ and $r\geq4$. 

\begin{lema}\cite{AvilaArscomb}\label{lemma:nu_2=r}
Let $(P,\mathcal{L})$ be an intersecting $r$-uniform linear system, with $r\geq4$ be an even integer. If $\tau=r$, then $\nu_2=r$.
\end{lema}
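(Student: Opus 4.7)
My plan is to prove $\nu_2 = r$ by establishing the two inequalities $\nu_2 \leq r$ and $\nu_2 \geq r$ in turn.

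\textbf{Upper bound.} I would first prove the universal estimate $\nu_2 \leq r+1$ valid for every intersecting $r$-uniform linear system. Fix any line $\ell_0$ belonging to a maximum $2$-packing $R$; by the intersecting and linear conditions each other line of $R$ meets $\ell_0$ in exactly one of its $r$ points, and by the $2$-packing property no two distinct lines of $R\setminus\{\ell_0\}$ can share such a point (else three lines of $R$ would be concurrent at it). Hence $|R|-1\leq r$, i.e., $\nu_2\leq r+1$. The remaining value $\nu_2=r+1$ is ruled out by Lemma~\ref{lemma:impar}: it would force $\tau=(r+2)/2$, and for $r\geq 4$ one has $(r+2)/2\leq r-1<r$, contradicting $\tau=r$. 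Thus $\nu_2\leq r$.

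\textbf{Lower bound.} Argue by contradiction: assume $\nu_2\leq r-1$ and exhibit a transversal of size at most $r-1$, contradicting $\tau=r$. Let $R=\{\ell_1,\ldots,\ell_k\}$ be a maximum $2$-packing with $k\leq r-1$, and let $p_{ij}=\ell_i\cap\ell_j$ denote its crossing points, pairwise distinct by linearity. By maximality, every line of $\mathcal{L}\setminus R$ passes through at least one $p_{ij}$ (otherwise it could be adjoined to $R$). Therefore $\{p_{ij}:1\leq i<j\leq k\}$ is already a transversal, giving $\tau\leq\binom{k}{2}$. For $r=4$ this immediately yields $\tau\leq\binom{3}{2}=3<4$, closing the argument.

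\textbf{Main obstacle.} The crude estimate $\binom{k}{2}$ exceeds $r-1$ as soon as $r\geq 6$, so the covering must be refined. The plan at this step is to (i) pair up the $k$ lines of $R$ and select one crossing point per pair, covering all of $R$ with $\lceil k/2\rceil$ points; (ii) double-count incidences between the lines of $\mathcal{L}\setminus R$ and the ``free'' points of each $\ell_i$ (the $r-k+1$ points not lying on any other $\ell_j\in R$) to locate at most $r-1-\lceil k/2\rceil$ further crossing points whose union hits every remaining line; and (iii) use the evenness of $r$ in a parity-sensitive bookkeeping step analogous to the one underpinning Lemma~\ref{lemma:nu_2=r+1}, this being where the main technical difficulty lies since the convenient off-by-one available in the odd case must be recovered differently here. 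Assembling these three ingredients produces a transversal of size at most $r-1$ and completes the contradiction.
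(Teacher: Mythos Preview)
The paper does not prove this lemma; it is quoted from the external reference~\cite{AvilaArscomb}, so there is no in-paper argument to compare your proposal against. I can only assess the proposal on its own merits.

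Your upper bound is correct and complete: the pigeonhole argument along a fixed line of a maximum $2$-packing gives $\nu_2\le r+1$, and Lemma~\ref{lemma:impar} then rules out equality since that would force $\tau=(r+2)/2<r$ for $r\ge4$.

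The lower bound, however, is only a proof when $r=4$. For $r\ge6$ you yourself note that the crude transversal $\{p_{ij}:i<j\}$ gives merely $\tau\le\binom{k}{2}$, which can far exceed $r-1$, and from that point on you list a three-step \emph{plan} rather than an argument. Step~(ii) is the entire difficulty: you assert that a double count along the free points of each $\ell_i$ will locate at most $r-1-\lceil k/2\rceil$ crossing points covering every line outside $R$, but nothing you have written shows why so few suffice. Bounding how many external lines meet a given $\ell_i$ is not the same as bounding how few crossing points are needed to hit them all; different external lines may go through different $p_{ij}$'s, and no mechanism in your sketch forces any concentration. Step~(iii) is a label (``parity-sensitive bookkeeping'') rather than an argument. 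As it stands, the proposal establishes $\nu_2\le r$ in full and $\nu_2\ge r$ only for $r=4$; the main case $r\ge6$ is a genuine gap.
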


\begin{lema}\label{lemma:nu_2=r,tau}
Let $(P,\mathcal{L})$ be an intersecting $r$-uniform linear system with $r\geq4$ be an even integer. If $\tau=r-1$, then $\nu_2=r$.
\end{lema}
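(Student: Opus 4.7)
The plan is to sandwich $\nu_2$ at $r$ by establishing both $\nu_2\leq r$ and $\nu_2\geq r$. For the upper bound, I would first observe that any intersecting $r$-uniform linear system satisfies $\nu_2\leq r+1$, by a pigeonhole argument: fixing a maximum $2$-packing $R$ and any $L_0\in R$, for distinct $L,L'\in R\setminus\{L_0\}$ the points $L\cap L_0$ and $L'\cap L_0$ must differ (else $\{L_0,L,L'\}$ would be triply concurrent), so $|R|-1\leq |L_0|=r$. To rule out $\nu_2=r+1$ I would invoke Lemma \ref{lemma:impar}: it would force $\tau=(r+2)/2$, and together with $\tau=r-1$ this yields $r=4$, an immediate contradiction whenever $r\geq 6$ (the borderline case $r=4$ requires a separate ad hoc argument).

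For the lower bound $\nu_2\geq r$, I would argue by contradiction, supposing $\nu_2\leq r-1$. The natural tool here is Theorem \ref{thm:tau<=nu2-1}: once one verifies $|\mathcal{L}|\leq \Delta+\Delta'+\nu_2-3$, the theorem delivers $\tau\leq \nu_2-1\leq r-2$, contradicting $\tau=r-1$. To establish this cap on $|\mathcal{L}|$, I would fix two points $p,p'$ realizing $\Delta$ and $\Delta'$ respectively, count at most $\Delta+\Delta'-1$ lines meeting $\{p,p'\}$ (subtracting one for the possible common line $pp'$), and bound the remaining lines (those avoiding both $p$ and $p'$) by showing that too many such lines, combined with appropriately chosen lines through $p$ and through $p'$, would build a $2$-packing of size exceeding $\nu_2$ and so violate maximality.

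The crucial obstacle is precisely this last counting step: converting the hypothesis $\nu_2\leq r-1$ into a sharp enough bound on lines avoiding $\{p,p'\}$ requires an intricate use of intersectingness, linearity, and the evenness of $r$, and is where the assumption that $r$ be even should do real work. A secondary obstacle is the $r=4$ edge case of the upper bound, where Lemma \ref{lemma:impar} alone does not eliminate $\nu_2=r+1$; handling it will likely require leveraging additional structural constraints on intersecting $r$-uniform linear systems that simultaneously realize $\nu_2=r+1$ and $\tau=(r+2)/2$.
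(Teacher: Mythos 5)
Your skeleton reproduces the paper's intended route: the paper's proof also consists of taking points realizing $\Delta$ and $\Delta'$ and applying Theorem \ref{thm:tau<=nu2-1} to get $\tau\leq\nu_2-1$, which with $\tau=r-1$ forces $\nu_2\geq r$. But your proposal is an outline, not a proof, because the two steps you yourself flag as ``obstacles'' are exactly where all the content lies, and neither is carried out. For the lower bound, Theorem \ref{thm:tau<=nu2-1} gives nothing until the hypothesis $|\mathcal{L}|\leq\Delta+\Delta'+\nu_2-3$ is actually established; your sketch (count the lines through $p$ and $p'$, then bound the lines missing both by extending a $2$-packing) produces no explicit bound on the lines avoiding $\{p,p'\}$ in terms of $\nu_2\leq r-1$, and it is not evident that an inequality of the required strength holds for all such systems. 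Saying the step ``requires an intricate use of intersectingness, linearity, and the evenness of $r$'' is an admission that the argument is missing. (In fairness, the paper's own write-up also states this inequality only as a conditional, so you have correctly located the unproved crux rather than invented a wrong one.)

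The second gap is worse than you think: the $r=4$ case of the upper bound is not a deferrable edge case, because under the hypothesis as literally stated ($r$-uniform, not $r$-partite) the lemma is false there. Take five lines $L_1,\ldots,L_5$ and ten points $p_{ij}=L_i\cap L_j$, each of degree two (the dual of $K_5$): this is an intersecting $4$-uniform linear system with $\tau=3=r-1$ and $\nu_2=5=r+1$. Hence no ``ad hoc argument'' can close your $r=4$ case without using $r$-partiteness, which the paper's proof does assume in its first sentence. With partiteness the exclusion of $\nu_2=r+1$ works uniformly for every even $r$: a $2$-packing of size $r+1$ in an intersecting $r$-uniform linear system induces precisely this dual-$K_{r+1}$ configuration, and placing one of its points on each of the $r$ sides of every line would yield a proper $r$-edge-colouring of $K_{r+1}$ whose colour classes are perfect matchings, impossible since $r+1$ is odd. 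This, rather than the line-counting step, is where the parity of $r$ genuinely does its work, so your guess about where evenness enters also needs to be revised.
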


\begin{proof}
Let $(P,\mathcal{L})$ be an intersecting $r$-partite linear system. Let $p,q\in P$ be two points such that $\Delta=deg(p)$ and $\Delta^\prime=\max\{deg(x): x\in
P\setminus\{p\}\}$. By Theorem \ref{thm:tau<=nu2-1} if $|\mathcal{L}|\leq\Delta+\Delta^\prime+\nu_2-3\leq3(r-2)+1$, then $\tau\leq\nu_2-1$, which implies that $\nu_2=r$. 
\end{proof}

\begin{coro}\label{coro:tau<r-2}
Let $(P,\mathcal{L})$ be an intersecting $r$-partite linear system with $r\geq4$ be an even integer. If $\nu_2\leq r-1$ then $\tau\leq r-2$.
\end{coro}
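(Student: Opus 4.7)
The plan is to argue by contradiction, reducing the problem to the two previous lemmas on $r$-uniform intersecting linear systems. Assume $(P,\mathcal{L})$ is an intersecting $r$-partite linear system with $r \geq 4$ even, $\nu_2 \leq r-1$, and $\tau \geq r-1$. Since every $r$-partite linear system is in particular $r$-uniform, Lemma \ref{lemma:nu_2=r} and Lemma \ref{lemma:nu_2=r,tau} both apply.

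The first step is to observe the trivial upper bound $\tau \leq r$ for any intersecting $r$-partite linear system: pick any $L \in \mathcal{L}$; since the system is $r$-uniform, $|L|=r$, and since the system is intersecting, every other line shares a point with $L$, so $L$ itself is a transversal of size $r$. Combined with the contradiction hypothesis, this leaves only the two cases $\tau = r$ and $\tau = r-1$ to rule out.

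The second step handles each case via the preceding lemmas. If $\tau = r$, Lemma \ref{lemma:nu_2=r} forces $\nu_2 = r$, contradicting $\nu_2 \leq r-1$. If $\tau = r-1$, Lemma \ref{lemma:nu_2=r,tau} also forces $\nu_2 = r$, again contradicting $\nu_2 \leq r-1$. Both possibilities are therefore impossible, so $\tau \leq r-2$.

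The argument has no genuine obstacle; it is essentially a two-line deduction that packages Lemma \ref{lemma:nu_2=r} and Lemma \ref{lemma:nu_2=r,tau} together. The only point that requires mild care is to record explicitly that the $r$-partite hypothesis implies $r$-uniformity (so that the $r$-uniform lemmas are available) and to justify the trivial bound $\tau \leq r$ in the intersecting $r$-partite setting, which is what restricts the contradiction to exactly the two cases covered by the lemmas.
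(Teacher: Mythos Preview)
Your proof is correct and follows essentially the same approach as the paper: the corollary is stated immediately after Lemma~\ref{lemma:nu_2=r,tau} as the joint contrapositive of Lemmas~\ref{lemma:nu_2=r} and~\ref{lemma:nu_2=r,tau}, once one uses the trivial bound $\tau\leq r$ in an intersecting $r$-uniform system. Your write-up simply makes explicit the two ingredients the paper leaves implicit (that $r$-partite implies $r$-uniform, and that any line of an intersecting system is itself a transversal).
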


By Lemmas \ref{lemma:nu_2=r} and \ref{lemma:nu_2=r,tau}, we have:

\begin{coro}\label{coro:main}
Let $(P,\mathcal{L})$ be an intersecting $r$-uniform linear system with $r\geq4$ be an even number. If $\tau\in\{r-1,r\}$, then $\nu_2=r$.	
\end{coro}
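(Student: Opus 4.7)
The plan is to handle this by a direct case split on the two possible values of $\tau$, invoking the two lemmas that immediately precede the corollary. Both lemmas have the same hypothesis setup as the corollary ($(P,\mathcal{L})$ intersecting $r$-uniform linear with $r \geq 4$ even), so there are no compatibility issues to verify.

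First I would suppose $\tau \in \{r-1, r\}$ and consider the two cases separately. If $\tau = r$, then Lemma \ref{lemma:nu_2=r} applies directly and yields $\nu_2 = r$. If instead $\tau = r-1$, then Lemma \ref{lemma:nu_2=r,tau} gives the same conclusion $\nu_2 = r$. Since these two cases exhaust the hypothesis $\tau \in \{r-1, r\}$, the corollary follows.

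There is essentially no obstacle here: the corollary is a packaging of the two lemmas into a single statement so that one can refer to the conclusion $\nu_2 = r$ whenever $\tau$ lies in the ``extremal'' range $\{r-1, r\}$. The only thing worth noting is that this packaging is what will presumably be used in the proof of Theorem \ref{thm:main_intro}, where one needs to combine the constraint $\nu_2 = r$ with Theorem \ref{thm:tau<=nu2-1} (taking into account that $\Delta + \Delta' \leq 2(r-1)$ for any linear system whose lines have size $r$) to derive the lower bound $|\mathcal{L}| \geq 3(r-2)+1$ when $\tau = r-1$.
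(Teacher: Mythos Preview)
Your proposal is correct and is exactly the paper's approach: the corollary is stated immediately after the sentence ``By Lemmas \ref{lemma:nu_2=r} and \ref{lemma:nu_2=r,tau}, we have,'' and your case split on $\tau=r$ versus $\tau=r-1$ is precisely how those two lemmas combine to give the conclusion.
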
 

By Corollaries \ref{Coro:nu=r,r_partita} and \ref{coro:tau<r-2}, and Lemma \ref{lemma:nu_2=r}, we have:

\begin{teo}
Let $r\geq3$ be an integer. Then every intersecting $r$-partite linear system satisfies
\begin{enumerate}
	\item $\tau\leq r-1$ If $\nu_2\leq r$ and $r\geq3$ be an odd integer; and
	\item $\tau\leq r-2$ if $\nu_2\leq r-1$ and $r\geq4$ be an even integer.
\end{enumerate}
\end{teo}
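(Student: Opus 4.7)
The plan is to prove each of the two assertions separately by invoking the contrapositive of an already-established lemma, and then recognize that the resulting statements coincide with the two corollaries proved immediately beforehand. The background observation that underlies both cases is that any intersecting $r$-uniform linear system automatically satisfies $\tau\leq r$, since a single line is itself a transversal: by the intersecting property, every other line meets it.

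For item 1, I would assume $r\geq 3$ is odd and $\nu_2\leq r$. Lemma \ref{lemma:nu_2=r+1} says that for such an $r$, $\tau=r$ forces $\nu_2=r+1$, which contradicts the hypothesis. Combined with the trivial bound $\tau\leq r$, this yields $\tau\leq r-1$. This is precisely Corollary \ref{Coro:nu=r,r_partita}, so the first item is just a citation of that corollary applied to the intersecting $r$-partite (hence $r$-uniform) case.

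For item 2, I would assume $r\geq 4$ is even and $\nu_2\leq r-1$. Lemma \ref{lemma:nu_2=r} says $\tau=r$ forces $\nu_2=r$, and Lemma \ref{lemma:nu_2=r,tau} says $\tau=r-1$ also forces $\nu_2=r$. Both conclusions contradict $\nu_2\leq r-1$, so neither $\tau=r$ nor $\tau=r-1$ is possible, leaving $\tau\leq r-2$. This is exactly Corollary \ref{coro:tau<r-2}.

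There is no real obstacle here: the theorem is a packaging of the two corollaries derived earlier in the section, and Lemma \ref{lemma:nu_2=r} is used only via its contrapositive combined with Lemma \ref{lemma:nu_2=r,tau} to rule out the two largest possible values of $\tau$ in the even case. I would therefore write the formal proof as a short two-sentence argument, one sentence per item, referring back to Corollaries \ref{Coro:nu=r,r_partita} and \ref{coro:tau<r-2}.
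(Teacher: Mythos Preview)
Your proposal is correct and matches the paper's approach: the theorem is stated there with the preamble ``By Corollaries \ref{Coro:nu=r,r_partita} and \ref{coro:tau<r-2}, and Lemma \ref{lemma:nu_2=r}, we have,'' i.e., it is exactly the packaging of those two corollaries that you describe. Your added remark that $\tau\leq r$ holds trivially (any single line is a transversal in an intersecting system) is a helpful clarification that the paper leaves implicit.
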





The intersecting linear Ryser's Conjecture is almost proved; it only remains to analyzes two cases concerning the 2-packing number: 
\begin{conjecture}
Let $r\geq3$ be an integer. Then every intersecting $r$-partite linear system satisfies:
	\begin{enumerate}
		\item $\tau\leq r-1$ if $\nu_2=r+1$, with $r\geq3$ an odd number.
		\item $\tau\leq r-1$ if $\nu_2=r$, with $r\geq4$ an even number.
	\end{enumerate}	
\end{conjecture}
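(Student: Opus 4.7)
The plan is to attack both cases by contradiction: assume an intersecting $r$-partite linear system $(P,\mathcal{L})$ with $\tau = r$ and the prescribed value of $\nu_2$, and extract a transversal of size at most $r-1$ from the $r$-partite structure.

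First I would fix a maximum 2-packing $R \subseteq \mathcal{L}$. In case~(1) (odd $r$, $|R|=r+1$) and in case~(2) (even $r$, $|R|=r$), the lines of $R$ pairwise share exactly one point by intersecting linearity, while no three share a point by the 2-packing condition; this yields $\binom{|R|}{2}$ distinct pairwise-intersection points. Distributing these across the $r$ sides $X_1,\ldots,X_r$, pigeonhole locates a side carrying many of them. I would pick one such point per side to cover all of $R$ and then try to extend this partial transversal to the whole of $\mathcal{L}$ using the intersecting hypothesis together with Theorem~\ref{thm:tau<=nu2-1} applied to the residual subsystem obtained after deleting the chosen points.

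A refinement worth exploring is a double-counting argument: compare the total point-degree sum, which equals $r|\mathcal{L}|$, against the constraint that each side must host enough high-degree points to block a short transversal. Combined with Lemma~\ref{lemma:nu_2=r+1} in the odd case and Lemma~\ref{lemma:nu_2=r,tau} in the even case, this should pin the pair $(\Delta,\Delta')$ down to a short list of possibilities, each of which could be handled by a direct structural check exploiting the $r$-partite assumption.

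The main obstacle, frankly, is that this statement captures exactly the unresolved cases of the intersecting linear Ryser Conjecture; the published proofs for $r \leq 9$ in~\cite{FRANCETIC201791} already demand heavy case analysis. Theorem~\ref{thm:tau<=nu2-1} only delivers $\tau \leq \nu_2 - 1$ when $|\mathcal{L}| \leq \Delta + \Delta' + \nu_2 - 3$, which for $\nu_2 \in \{r, r+1\}$ clashes with the lower bound $|\mathcal{L}| \geq 3(r-2)+1$ from Theorem~\ref{thm:main_intro} unless $\Delta + \Delta' \geq 2(r-2)$. Pushing past this barrier will likely require strengthening Theorem~\ref{thm:tau<=nu2-1} to exploit the $r$-partite hypothesis explicitly, or introducing an entirely new structural invariant beyond the 2-packing number.
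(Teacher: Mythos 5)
There is a fundamental mismatch here: the statement you are trying to prove is presented in the paper as a \emph{conjecture}, not a theorem --- the paper explicitly says these are exactly the two cases of the intersecting linear Ryser conjecture that remain open, and it offers no proof of them. Your own closing paragraph concedes this, and indeed what you have written is a research plan rather than a proof. As it stands, no step of the plan actually closes the argument.

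The concrete gap is the extension step. Covering the 2-packing $R$ with roughly $\lceil |R|/2\rceil$ intersection points is easy (no three lines of $R$ are concurrent, so each point covers at most two of them), but that covers only $R$; the whole difficulty of the conjecture is covering the remaining lines of $\mathcal{L}$ with a total of $r-1$ points, and ``extend this partial transversal using Theorem~\ref{thm:tau<=nu2-1} applied to the residual subsystem'' does not work. First, Theorem~\ref{thm:tau<=nu2-1} needs $|\mathcal{L}|\leq\Delta+\Delta'+\nu_2-3$, and for the extremal systems in question this hypothesis fails: Theorem~\ref{thm:main_intro} (and the bounds of Aharoni et al.\ and Mansour et al.\ cited in the introduction) force $|\mathcal{L}|$ to be at least about $3r$, while $\nu_2\in\{r,r+1\}$, so the inequality would require $\Delta+\Delta'\gtrsim 2r$, which an intersecting linear system on these parameters need not satisfy. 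Second, after deleting the chosen points the residual system is generally no longer intersecting (lines that met only at deleted points become disjoint), so neither the cited lemmas nor the $r$-partite structure can be reapplied to it in the way you suggest; the pigeonhole and double-counting refinements likewise yield no contradiction without substantial new structural input. In short, the proposal identifies the obstacle correctly but does not overcome it, and the statement remains unproven both in your write-up and in the paper.
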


\begin{teo}
Let $r\geq4$ be an even integer, then $3(r-2)+1\leq f_l(r)$.
\end{teo}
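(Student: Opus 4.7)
The plan is to proceed by contradiction: suppose $(P,\mathcal{L})$ is an intersecting $r$-partite linear system with $\tau=r-1$ and $|\mathcal{L}|\leq 3(r-2)$, and exhibit a transversal of size $r-2$. Two preparatory facts are used. By Lemma~\ref{lemma:nu_2=r,tau}, $\nu_2=r$. Moreover, since the system is intersecting $r$-partite linear with $\tau\geq 2$, every point has degree at most $r-1$: for any $p\in X_i$, pick a line $\ell\not\ni p$; by linearity the $\deg(p)$ lines through $p$ meet $\ell$ in distinct points, and by the $r$-partite condition none of these is the unique representative of $X_i$ on $\ell$, so only the $r-1$ remaining points of $\ell$ are available. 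Hence $\Delta,\Delta'\leq r-1$ and $\Delta+\Delta'+\nu_2-3\leq (r-1)+(r-1)+r-3=3(r-2)+1$.

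With $p,q$ chosen as in Theorem~\ref{thm:tau<=nu2-1}, the strategy is to cover $\mathcal{L}_p\cup\mathcal{L}_q$ by $\{p,q\}$ and then cover the residual lines one at a time. In the favorable subcase $\deg(p)=\deg(q)=r-1$ with $p,q$ not sharing a common line, $|\mathcal{L}_p\cup\mathcal{L}_q|=2(r-1)$, so the residual has at most $|\mathcal{L}|-2(r-1)\leq r-4$ lines and the total transversal size is $2+(r-4)=r-2$. Alternatively, if three points $p_1,p_2,p_3$ of degree $r-1$ are available, they cannot be collinear (inclusion--exclusion would give $|\mathcal{L}_{p_1}\cup\mathcal{L}_{p_2}\cup\mathcal{L}_{p_3}|=3r-5>|\mathcal{L}|$, impossible), so they form a triangle covering $3(r-1)-3=3r-6\geq|\mathcal{L}|$ lines, yielding a transversal of size $3\leq r-2$ for $r\geq 6$.

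The hard part is the degenerate configuration in which only two maximum-degree points exist and they unavoidably lie on a common line, or the maximum degree is strictly below $r-1$. To handle it, I would exploit the 2-packing $R=\{\ell_1,\ldots,\ell_r\}$ of size $\nu_2=r$ guaranteed by the first step: every line of $\mathcal{L}\setminus R$ passes through one of the $\binom{r}{2}$ distinct pairwise intersection points $p_{ij}=\ell_i\cap\ell_j$. Combining this forced incidence structure with the degree bound $\deg(p_{ij})\leq r-1$ should allow a transversal of size $r-2$ to be built from intersection points of $R$ (for instance, from a well-chosen index-matching of $\{1,\ldots,r\}$ extended by a few extra points covering the residual lines through unmatched intersection points), with the borderline $r=4$ case plausibly requiring a separate direct analysis.
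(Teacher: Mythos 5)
Your argument is incomplete precisely where the difficulty of the theorem lies. The subcases you settle are fine: the degree bound $\Delta\le r-1$ for an intersecting $r$-partite linear system with $\tau\ge2$, the case of two degree-$(r-1)$ points with no common line (giving a transversal of size $2+(r-4)=r-2$), the case of three degree-$(r-1)$ points for $r\ge6$, and also your observation that every line outside a maximum $2$-packing $R$ must pass through one of the $\binom{r}{2}$ distinct points $p_{ij}$ (otherwise $R\cup\{\ell\}$ would be a $2$-packing of size $r+1$). But the remaining configurations --- $\Delta\le r-2$, a single point of degree $r-1$, or exactly two such points lying on a common line --- are exactly the hard ones, and for them you offer only the assertion that the incidence structure of the $p_{ij}$ ``should allow'' a transversal of size $r-2$, with $r=4$ ``plausibly'' needing separate treatment. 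That is a plan, not a proof. The counting that drives your favorable subcase collapses there: with $\Delta=\Delta'=r-2$ the two chosen points cover at least $2r-5$ lines, leaving up to $r-1$ residual lines and a bound of only $r+1$; even in the collinear case with $\deg(p)=\deg(q)=r-1$ one gets $2+(r-3)=r-1$, which is no contradiction. Some of this can be repaired (for instance, residual lines pairwise intersect, so they can be covered two at a time), but nothing of the sort is carried out, and the borderline case $r=4$ is left open outright.

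There is also a structural mismatch with the tools you invoke. The inequality $\Delta+\Delta'+\nu_2-3\le 3(r-2)+1$ that you record points the wrong way for Theorem~\ref{thm:tau<=nu2-1}, whose hypothesis is $|\mathcal{L}|\le\Delta+\Delta'+\nu_2-3$; and once you have forced $\nu_2=r$ via Lemma~\ref{lemma:nu_2=r,tau}, that theorem could at best return $\tau\le\nu_2-1=r-1$, which does not contradict $\tau=r-1$. The paper's own proof takes the opposite route: it reduces, via Corollary~\ref{coro:main} (equivalently Corollary~\ref{coro:tau<r-2}), to the regime $\nu_2\le r-1$, where $\Delta+\Delta'+\nu_2-3\le 3(r-2)$ and the conclusion of Theorem~\ref{thm:tau<=nu2-1} reads $\tau\le\nu_2-1\le r-2$; no transversal is built by hand, all the covering work being delegated to that cited theorem. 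Your route instead commits to reproving the needed covering statement directly in the regime $\nu_2=r$, and the degenerate configurations you leave unresolved are essentially the whole substance of such a statement; until they are handled, the proposal does not establish $3(r-2)+1\le f_l(r)$.
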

\begin{proof}
Suppose that $\nu_2\leq r-1$ (by Corollary \ref{coro:main}). Let $p,q\in P$ be two points such that $deg(p)=\Delta$ and $\Delta'=\deg(q)=\max\{deg(x): x\in
P\setminus\{p\}\}$. By Theorem \ref{thm:tau<=nu2-1} if $|\mathcal{L}|\leq\Delta+\Delta'+\nu_2-3\leq3(r-2)$, then $\tau\leq\nu_2-1\leq r-2$. Therefore, $3(r-2)+1\leq f_l(r)$.
\end{proof}


{\bf Acknowledgment}

Research was partially supported by SNI and CONACyT.

\end{document}